\newtheorem{thm}{Theorem}[section]
\newtheorem{lem}[thm]{Lemma}
\newtheorem{cor}[thm]{Corollary}
\newtheorem{prop}[thm]{Proposition}
\newtheorem{rem}[thm]{Remark}
\theoremstyle{definition}
\newtheorem{defn}[thm]{Definition}
\newcommand{\Ric}{{\rm Ric}}
\begin{document}

\title{Classification of almost Yamabe solitons in Euclidean spaces}


\author{Tatsuya Seko}
\address{Department of Mathematics,
 Shimane University, Nishikawatsu 1060 Matsue, 690-8504, Japan.}
\curraddr{}
\email{seko92903991@gmail.com}

\author{Shun Maeta}
\address{Department of Mathematics,
 Shimane University, Nishikawatsu 1060 Matsue, 690-8504, Japan.}
\curraddr{}
\email{shun.maeta@gmail.com~{\em or}~maeta@riko.shimane-u.ac.jp}
\thanks{The second author is  supported partially by the Grant-in-Aid for Young Scientists(B), No.15K17542, Japan Society for the Promotion of Science, and  partially by JSPS Overseas Research Fellowships 2017-2019 No. 70.}

\subjclass[2010]{53C25, 53C40, 53C42}

\date{}

\dedicatory{}

\commby{}

\begin{abstract}
In this paper, we completely classify almost Yamabe solitons on hypersurfaces in Euclidean spaces arisen from the position vector field. Some results of almost Yamabe solitons with a concurrent vector field and almost Yamabe solitons on submanifolds in Riemannian manifolds equipped with a concurrent vector field are also presented. Moreover, we classify complete Ricci solitons on minimal submanifolds in non-positively curved space forms. For almost Yamabe solitons, all of results in this paper can be applied to Yamabe solitons.
\end{abstract}

\maketitle


\bibliographystyle{amsplain}

\section{Introduction}\label{intro} 
~\\
$(M,g,v,\rho)$ is called a Yamabe soliton if it satisfies
\begin{equation}\label{YS}
(R-\rho)g=\frac{1}{2}\mathcal{L}_vg,
\end{equation}
where $\mathcal{L}_vg$ is the Lie-derivative, $R$ is the scalar curvature of $M$ and $\rho$ is a constant.
If $\rho > 0, ~\rho = 0, ~\rho < 0,$ then a Yamabe soliton $(M,g,v,\rho)$ is called a shrinking, a steady or an expanding Yamabe soliton, respectively.

A Yamabe soliton $(M,g,v,\rho)$ is called a gradient Yamabe soliton 
if $v$ is the gradient of some function $f$ on $M$. We denote a gradient Yamabe soliton by $(M,g,f,\rho).$

Yamabe solitons are special solutions to the Yamabe flow. Under some conditions, Yamabe solitons have been studied (cf. \cite{2}, \cite{3} and \cite{4}). 
In particular, it is shown that any compact Yamabe soliton has constant scalar curvature (cf. \cite{2} and \cite{3}).

E. Barbosa and E. Ribeiro introduced a generalization of Yamabe solitons in \cite{BB13} as follows.
\begin{defn}[\cite{BB13}]
A Riemannian manifold $(M, g, v, \rho)$ is an almost Yamabe soliton if there exist a complete vector field $v$ and a smooth soliton function $\rho$ on $M$ satisfying 
\begin{equation}\label{AYS}
(R-\rho)g=\frac{1}{2}\mathcal{L}_vg.
\end{equation}
An almost Yamabe soliton $(M,g,v,\rho)$ is called a gradient almost Yamabe soliton if $v$ is the gradient of some function $f$ on $M$. We denote a gradient almost Yamabe soliton by $(M,g,f,\rho).$
\end{defn}
\begin{rem}
From the definition, if $\rho$ is constant, almost Yamabe solitons are Yamabe solitons.
\end{rem}

A vector field $v$ on $M$ is called a concurrent vector field if it satisfies
\begin{equation}\label{conc.vec.}
{\nabla}_Xv=X,
\end{equation}
for any vector field $X$ on $M$, where $\nabla$ is the Levi-Civita connection on $M$. One of the most important example of Riemannian manifolds with a concurrent vector field is Euclidean spaces. Because, the position vector field on Euclidean spaces satisfies $(\ref{conc.vec.})$. 
Riemannian manifolds endowed with concurrent vector fields have been studied (cf. \cite{1}, \cite{5} and \cite{6}).

In this paper, we completely classify almost Yamabe solitons on hypersurfaces in Euclidean spaces arisen from the position vector field $v$. 
We denote the tangential and the normal components of $v$ by $v^T$ and $v^{\perp}$, respectively.
\begin{thm}[Theorem~$\ref{main.2}$]\label{main}
Any almost Yamabe soliton $(M,g,v^T,\rho)$ on a hypersurface in a Euclidean space $\mathbb{E}^{n+1}$ is contained in either a hyperplane or a sphere.
\end{thm}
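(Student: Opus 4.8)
The plan is to convert the soliton equation into a pointwise condition on the shape operator, using that the position field $v$ is concurrent in $\mathbb E^{n+1}$. Fix a local unit normal $\xi$, let $A$ be the shape operator and $h(X,Y)=\la AX,Y\ra$ the second fundamental form, so that $\onabla_XY=\nabla_XY+h(X,Y)\xi$ and $\onabla_X\xi=-AX$. Decomposing $v=v^T+\phi\,\xi$ with $\phi=\la v,\xi\ra$, I would first differentiate and impose $\onabla_Xv=X$; separating the tangential and normal parts of $\onabla_X(v^T+\phi\xi)=X$ produces the two identities
\[ \nabla_Xv^T=X+\phi\,AX,\qquad \grad\phi=-A\,v^T. \]

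Next I would feed the first identity into $(\ref{AYS})$. Because $A$ is self-adjoint, $\tfrac12(\mathcal L_{v^T}g)(X,Y)=\la X,Y\ra+\phi\,h(X,Y)$, so the soliton equation collapses to the scalar relation $\phi\,h(X,Y)=(R-\rho-1)g(X,Y)$, that is, the operator identity $\phi\,A=(R-\rho-1)\,\mathrm{Id}$. As a convenient reformulation of the same computation, the function $F:=\tfrac12|v|^2$ satisfies $\grad F=v^T$ and $\mathrm{Hess}\,F=(R-\rho)\,g$, a Hessian-proportional-to-the-metric equation of Tashiro type that I may invoke later.

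On the open set $U=\{\phi\neq0\}$ the identity reads $A=\mu\,\mathrm{Id}$ with $\mu=(R-\rho-1)/\phi$, so $U$ is totally umbilic. The Codazzi equation for an umbilic hypersurface gives $(X\mu)Y=(Y\mu)X$, which forces $\mu$ to be locally constant once $n\ge2$; by the classical classification of totally umbilic hypersurfaces of $\mathbb E^{n+1}$, each component of $U$ is then an open piece of a round sphere (if $\mu\neq0$) or of a hyperplane (if $\mu=0$). To see that one and the same sphere or hyperplane is involved throughout, I would integrate $\grad(\phi+\mu F)=-\mu v^T+\mu v^T=0$ to conclude that $\phi+\tfrac\mu2|v|^2$ is constant on each component, which recovers the center and radius intrinsically and lets continuity propagate a single ambient model across $M$.

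The main obstacle is the degenerate locus $Z=\{\phi=0\}$, where $\phi A=(R-\rho-1)\mathrm{Id}$ is vacuous. On the interior of $Z$ one only recovers $\nabla_Xv^T=X$, so $v^T$ is an intrinsic concurrent field and $R=\rho+1$; geometrically $Z$ consists of pieces on which the position vector is everywhere tangent, i.e. cone-like pieces, and this alone does not place $M$ in a hyperplane. This is exactly where completeness must be used: a complete manifold admitting a field with $\nabla_Xv^T=X$ — equivalently a function with $\mathrm{Hess}\,F=g$ — is isometric to Euclidean space by a Tashiro/Obata-type rigidity, and the only complete smooth cone with vertex at the origin is a hyperplane through it (the punctured circular cone being incomplete, hence excluded). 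I would therefore finish with a connectedness argument gluing the umbilic analysis on $U$ to this rigidity on $Z$: either $\phi\equiv0$ and $M$ lies in a hyperplane through the origin, or $U$ is dense and the constant sphere or hyperplane found above contains all of $M$. The delicate point is the matching along $\partial U$, ensuring that the locally constant data $\mu$ and the center do not jump.
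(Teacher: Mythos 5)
Your main line, through the umbilicity step, is exactly the paper's proof: your identities $\nabla_Xv^T=X+\phi\,AX$ and $\grad\phi=-Av^T$ are the hypersurface case of the computation (s.2) in the proof of Lemma~\ref{NSAYS}; your operator identity $\phi A=(R-\rho-1)\,\mathrm{Id}$ is \eqref{ENSAYS} with $v^{\perp}=\phi N$ (your $\phi$ is the paper's support function $\lambda$); your potential $F=\tfrac12|v|^2$ with $\grad F=v^T$ is Proposition~\ref{AYS is AGYS}; and on $\{\phi\neq0\}$ your Codazzi argument forcing $\mu$ locally constant and the sphere/hyperplane conclusion via the constant vector $v+\mu^{-1}N$ reproduces Section~\ref{Classification} essentially verbatim.

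The divergence is at $Z=\{\phi=0\}$, and there your diagnosis is sharper than the paper but your repair does not close the gap. You are right that the passage from \eqref{e.1} and \eqref{e.2} to $\kappa_i=\alpha$ silently divides by $\lambda$, so the published proof, like yours, yields umbilicity only off $Z$; on $\mathrm{int}\,Z$ one only gets $\nabla_Xv^T=X$ and $R=\rho+1$, i.e.\ cone-like pieces, exactly as you say. However: (i) the Tashiro/Obata rigidity you invoke requires completeness of the manifold carrying $\mathrm{Hess}\,F=g$, and $\mathrm{int}\,Z$ is an open subset that inherits no completeness from $M$ --- indeed the theorem does not assume $M$ complete at all (the definition only asks the soliton field to be complete, and the dilation flow $x\mapsto e^tx$ makes $v^T=v$ complete on any punctured cone); (ii) the dichotomy ``$\phi\equiv0$ or $U$ dense'' is asserted, not proved; and (iii) the matching of $\mu$ and the center across components of $U$ separated by $\mathrm{int}\,Z$ --- your self-declared delicate point --- is precisely what is missing, and it cannot be supplied under the stated hypotheses: the cone over a closed non-circular curve in $S^2$ is a smooth flat hypersurface of $\mathbb{E}^{3}$ on which $v$ is tangent and complete, and with $\rho=R-1=-1$ it satisfies \eqref{AYS} (it is even a genuine expanding Yamabe soliton), yet it lies in no hyperplane and no sphere. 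So where your write-up is rigorous it coincides with the paper's argument; the extra step identifies the right obstruction --- one the paper itself skips --- but eliminating it would require additional hypotheses (e.g.\ $\phi$ nowhere vanishing, or compactness/completeness of $M$ with a genuine boundary-matching argument), not just the sketch given.
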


The remaining sections are organized as follows. Section~$\ref{Pre}$ contains some necessary definitions and preliminary geometric results.
In section~$\ref{concurrent}$, we show that any Yamabe soliton $(M,g,v,\rho)$ with a concurrent vector field $v$ is a gradient expanding Yamabe soliton with $\rho=-1$ and  the scalar curvature is zero. 
In section~$\ref{SubAYS}$, we consider almost Yamabe solitons on submanifolds in Riemannian manifolds endowed with a concurrent vector field.
Section~$\ref{Classification}$ is devoted to the proof of Theorem~$\ref{main}$. Finally in Appendix, we completely classify complete gradient Ricci solitons on minimal submanifolds in non-positively curved space forms.
\section{Preliminaries}\label{Pre} 

Let $(N,\tilde{g})$ be an $m$-dimensional Riemannian manifold and $(M,g)$ be an $n$-dimensional submanifold in $(N,\tilde{g})$.
We denote Levi-Civita connections on $(M,g)$ and $(N,\tilde{g})$ by $\nabla$ and $\tilde{\nabla}$ , respectively.

For any vector fields $X,Y$ tangent to $M$ and $\eta$ normal to $M$, the formula of Gauss is given by
\begin{equation*}
{\tilde{\nabla}}_XY={\nabla}_XY+h(X,Y),
\end{equation*}
where ${\nabla}_XY$ and $h(X,Y)$ are the tangential and the normal components of ${\tilde{\nabla}}_XY$.
The formula of Weingarten is given by
\begin{equation*}
{\tilde{\nabla}}_X\eta=-A_{\eta}(X)+D_X\eta,
\end{equation*}
where $-A_{\eta}(X)$ and $D_X\eta$ are the tangential and the normal components of ${\tilde{\nabla}}_X\eta$.
$A_{\eta}(X)$ and $h(X,Y)$ are related by
\begin{equation*}
g(A_{\eta}(X),Y)=\tilde{g}(h(X,Y),\eta).
\end{equation*}
The mean curvature vector $H$ of $M$ in $N$ is given by
\begin{equation*}
\displaystyle H=\frac{1}{n}~\text{trace}~h.
\end{equation*}
For any vector fields $X,Y,Z,W$ tangent to $M$, the equation of Gauss is given by
\begin{equation*}
\begin{tabular}{ll}
$\tilde{g}(\tilde{Rm}(X,Y)Z,W)=$ & $g(Rm(X,Y)Z,W)$ \vspace{0.3pc}\\
~ & $+\tilde{g}(h(X,Z),h(Y,W))$ \vspace{0.3pc}\\
~ & $-\tilde{g}(h(X,W),h(Y,Z)),$
\end{tabular}
\end{equation*}
where $Rm$ and $\tilde{Rm}$ are Riemannian curvature tensors of $M$ and $N$, respectively.
The equation of Codazzi is given by
\begin{equation*}
(\tilde{Rm}(X,Y)Z)^{\perp}=({\bar{\nabla}}_Xh)(Y,Z)-({\bar{\nabla}}_Yh)(X,Z),
\end{equation*}
where $(\tilde{Rm}(X,Y)Z)^{\perp}$ is the normal component of $\tilde{Rm}(X,Y)Z$ and ${\bar{\nabla}}_Xh$ is defined by
\begin{equation*}
({\bar{\nabla}}_Xh)(Y,Z)=D_Xh(Y,Z)-h({\nabla}_XY,Z)-h(Y,{\nabla}_XZ).
\end{equation*}
If $N$ is a space of constant curvature, then the equation of Codazzi reduces to
\begin{equation*}
0=({\bar{\nabla}}_Xh)(Y,Z)-({\bar{\nabla}}_Yh)(X,Z).
\end{equation*}

\section{Almost Yamabe solitons with a concurrent vector field}\label{concurrent}

Firstly, we show a formula of almost Yamabe solitons which is useful for study of almost Yamabe solitons.
\begin{lem}
\begin{equation}\label{f1}
(n-1)\Delta (R-\rho)+\frac{1}{2}~g(\nabla R,\nabla f)+R(R-\rho)=0.
\end{equation}
\end{lem}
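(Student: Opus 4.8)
The plan is to reduce to the gradient situation and then differentiate the soliton equation twice, feeding in the second Bianchi identity. Since here $v=\nabla f$, one has $\frac{1}{2}\mathcal{L}_{\nabla f}g=\nabla^2 f$, so the defining relation $(\ref{AYS})$ becomes the Hessian equation
\begin{equation*}
\nabla^2 f=(R-\rho)\,g.
\end{equation*}
I write $\lambda=R-\rho$ for brevity. Taking the trace gives $\Delta f=n\lambda$, which is the ingredient that will later feed the Bochner step.

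Next I would take one covariant derivative of the Hessian equation and contract. Differentiating $\nabla^2 f=\lambda g$ produces third derivatives of $f$; commuting two of them through the Ricci identity and invoking the Bochner relation $\Delta(\nabla_j f)=\nabla_j(\Delta f)+R_{jl}\nabla^l f$, and then substituting $\Delta f=n\lambda$, reorganizes everything into
\begin{equation*}
(n-1)\,\nabla\lambda=-\Ric(\nabla f).
\end{equation*}
It is precisely the Bochner term together with the trace relation $\Delta f=n\lambda$ that turns the naive coefficient $n$ into $n-1$.

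Finally I take the divergence of this identity. On the right-hand side the product rule splits $\d\bigl(\Ric(\nabla f)\bigr)$ into a term $(\d\,\Ric)(\nabla f)$ and a term $\langle \Ric,\nabla^2 f\rangle$. For the first I apply the contracted second Bianchi identity $\d\,\Ric=\frac{1}{2}\nabla R$, giving $\frac{1}{2}g(\nabla R,\nabla f)$. For the second I re-insert the Hessian equation $\nabla^2 f=\lambda g$, so that $\langle \Ric,\nabla^2 f\rangle=\lambda\,\mathrm{tr}_g\Ric=\lambda R$. Assembling these yields
\begin{equation*}
(n-1)\Delta\lambda+\frac{1}{2}\,g(\nabla R,\nabla f)+\lambda R=0,
\end{equation*}
which is exactly $(\ref{f1})$ upon recalling $\lambda=R-\rho$.

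The computation is essentially forced once the two differentiation steps are in place; the only real point of care is the bookkeeping of the commutator of third derivatives and the accompanying sign conventions in the Ricci identity. Everything else is a routine contraction, so I expect no genuine obstacle beyond that convention check.
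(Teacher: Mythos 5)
Your proposal is correct and matches the paper's argument essentially step for step: the paper likewise derives the first-order identity $(n-1)\nabla_i(R-\rho)+R_{ij}\nabla_j f=0$ from the Hessian form of the soliton equation together with the commutation $\Delta\nabla_i f=\nabla_i\Delta f+R_{ij}\nabla_j f$, then differentiates and traces (your divergence step), using the contracted Bianchi identity and re-inserting $\nabla^2 f=(R-\rho)g$ exactly as you describe. The only difference is cosmetic: the paper works in index notation while you phrase the same computation invariantly.
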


\begin{proof}
Since
\begin{equation*}
\Delta {\nabla}_if={\nabla}_i\Delta f+R_{ij}{\nabla}_jf,
\end{equation*}
\begin{equation*}
\Delta {\nabla}_if={\nabla}_k{\nabla}_k{\nabla}_if={\nabla}_k((R-\rho)g_{ki})={\nabla}_i(R-\rho),
\end{equation*}
and
\begin{equation*}
{\nabla}_i\Delta f={\nabla}_i(n(R-\rho))=n{\nabla}_i(R-\rho),
\end{equation*}
we have
\begin{equation}\label{f1.1}
(n-1){\nabla}_i(R-\rho)+R_{ij}{\nabla}_jf=0,
\end{equation}
where $R_{ij}$ is the Ricci curvature of $M$.
By applying ${\nabla}_l$ to the both side of $(\ref{f1.1})$, we obtain
\begin{equation}\label{f1.2}
(n-1){\nabla}_l{\nabla}_i(R-\rho)+{\nabla}_lR_{ij} \cdot {\nabla}_jf+R_{ij}{\nabla}_l{\nabla}_jf=0.
\end{equation}
Taking the trace, we obtain $(\ref{f1})$.
\end{proof}

\begin{prop}\label{AYSwithC}
If an almost Yamabe soliton $(M,g,v,\rho)$ has a concurrent vector field $v$, then $M$ is a gradient almost Yamabe soliton with 
$R=\rho+1$.
\end{prop}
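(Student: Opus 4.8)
The plan is to exploit the concurrent condition $\nabla_X v = X$ to evaluate the Lie derivative directly. Recall that for the Levi-Civita connection one has the identity $(\mathcal{L}_v g)(X,Y) = g(\nabla_X v, Y) + g(X, \nabla_Y v)$ for all tangent fields $X, Y$. Substituting $\nabla_X v = X$ and $\nabla_Y v = Y$ gives $(\mathcal{L}_v g)(X,Y) = 2\,g(X,Y)$, hence $\frac{1}{2}\mathcal{L}_v g = g$. Plugging this into the defining equation $(\ref{AYS})$, namely $(R-\rho)\,g = \frac{1}{2}\mathcal{L}_v g$, immediately yields $(R-\rho)\,g = g$; comparing the two sides forces $R-\rho = 1$, that is, $R = \rho + 1$.

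It remains to show that $v$ is a gradient field. I would introduce the potential $f = \frac{1}{2}\,g(v,v)$ and verify that $\nabla f = v$. Indeed, for any tangent field $X$, metric compatibility of $\nabla$ together with the concurrent condition gives $X(f) = \frac{1}{2}\,X\bigl(g(v,v)\bigr) = g(\nabla_X v, v) = g(X, v)$. Since this holds for every $X$, we conclude $g(\nabla f, X) = g(v, X)$ for all $X$, whence $\nabla f = v$. Thus $(M, g, f, \rho)$ is a gradient almost Yamabe soliton, and combined with the first step this establishes both assertions.

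There is no serious obstacle here: the argument is a direct two-step computation rather than a genuinely delicate estimate. The only point requiring a little care is the coefficient bookkeeping in the Lie-derivative identity and in the derivative of $g(v,v)$, which must be arranged so that the factor $\frac{1}{2}$ appearing in $(\ref{AYS})$ cancels correctly and leaves exactly $R = \rho + 1$ rather than some other constant. Once that is checked, the conclusion is immediate.
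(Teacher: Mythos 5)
Your proposal is correct and follows essentially the same route as the paper's own proof: both establish $\mathcal{L}_v g = 2g$ from the concurrent condition to deduce $R = \rho + 1$ via the soliton equation, and both take $f = \tfrac{1}{2}\,g(v,v)$ as the potential, verifying $\nabla f = v$ by metric compatibility. The only difference is the order of the two steps, which is immaterial.
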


\begin{proof}

Firstly, we show that an almost Yamabe soliton with a concurrent vector field is a gradient almost Yamabe soliton.
Set
\begin{equation*}
\displaystyle f=\frac{1}{2}~g(v,v).
\end{equation*}
Then we have
\begin{equation*}
g(\nabla f,X)=X(f)=g(v,{\nabla}_Xv)=g(v,X) ,
\end{equation*}
for any vector field $X$ on $M$.

Secondly, we show that $R=\rho+1.$
Since $v$ is a concurrent vector field, we have
\begin{equation}\label{c.1}
\mathcal{L}_vg=2g.
\end{equation}
Combining $(\ref{c.1})$ with $(\ref{AYS})$, we obtain 
\begin{equation}
R=\rho +1.
\end{equation}
\end{proof}

\begin{cor}
Any compact almost Yamabe soliton with a concurrent vector field is a gradient expanding Yamabe soliton with zero scalar curvature and $\rho=-1$.
\end{cor}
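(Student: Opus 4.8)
The plan is to feed the constancy of $R-\rho$ obtained in Proposition~\ref{AYSwithC} into the pointwise identity (\ref{f1}), and then convert the resulting relation into pointwise information about $R$ using the compactness of $M$ via the maximum principle.

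First I would invoke Proposition~\ref{AYSwithC}: the almost Yamabe soliton is already gradient, with potential $f=\frac12 g(v,v)$, and it satisfies $R=\rho+1$, so that $R-\rho\equiv 1$ is constant. Substituting $R-\rho=1$ into (\ref{f1}) kills the Laplacian term $(n-1)\Delta(R-\rho)$ and collapses $R(R-\rho)$ to $R$, leaving the pointwise identity
\begin{equation*}
\tfrac12\,g(\nabla R,\nabla f)+R=0
\end{equation*}
valid at every point of $M$.

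Next I would use compactness. Since $M$ is compact and $R$ is smooth, $R$ attains its maximum at some $p\in M$ and its minimum at some $q\in M$, and $\nabla R$ vanishes at both. Evaluating the displayed identity at $p$ gives $R(p)=0$, whence $\max_M R=0$ and $R\le 0$ on $M$; evaluating it at $q$ gives $R(q)=0$, whence $\min_M R=0$ and $R\ge 0$ on $M$. Hence $R\equiv 0$. Then $R=\rho+1$ forces $\rho\equiv -1$, which is constant, so by the Remark the soliton is a genuine Yamabe soliton, and since $\rho=-1<0$ it is expanding; together with the gradient property from Proposition~\ref{AYSwithC} this is exactly the assertion.

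The step I expect to be the crux is the passage from the integral-free identity to the pointwise vanishing of $R$. What makes it work cleanly is that the gradient term $g(\nabla R,\nabla f)$ automatically drops out at any interior extremum of $R$, so that neither a Bochner computation nor an integration by parts is needed; compactness enters only to guarantee that a maximum and a minimum of $R$ are actually attained, and it is precisely the two-sided bound $0\le R\le 0$ that pins $R$ down to zero.
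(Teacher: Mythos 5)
Your argument is correct, and it is genuinely different from the paper's. The paper's proof goes through the potential: Proposition~\ref{AYSwithC} plus (\ref{AYS}) gives $\nabla\nabla f=(R-\rho)g=g$, hence $\Delta f=n$; it then applies the maximum principle to conclude $f$ is constant, and feeds that into (\ref{f1}) (with $\nabla f=0$ and $R-\rho\equiv 1$ the identity collapses to $R(R-\rho)=R=0$, whence $\rho=-1$). You instead bypass any analysis of $f$: substituting $R-\rho\equiv 1$ into (\ref{f1}) yields the pointwise identity $\tfrac12\,g(\nabla R,\nabla f)+R=0$, and evaluating at a maximum and a minimum of $R$, where $\nabla R$ vanishes, pins $R\equiv 0$ directly; $\rho\equiv-1$ then follows from $R=\rho+1$. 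Your route is arguably cleaner, and it sidesteps an internal tension in the paper's proof: the paper's intermediate conclusion that $f$ is constant is incompatible with $\Delta f=n>0$ (indeed $\int_M \Delta f\,dv=0$ while $\int_M n\,dv=n\,\mathrm{Vol}(M)>0$), which in fact shows that no compact manifold admits a concurrent vector field at all, since $\mathrm{div}\,v=n$; the corollary is therefore vacuously true, and the paper's argument is in effect an unacknowledged proof by contradiction. Your derivation never generates this inconsistency: it is a valid conditional deduction of the stated conclusion from the hypotheses, using only that the gradient term in (\ref{f1}) drops out at interior extrema of $R$ — though you might note, as a remark, that the hypotheses themselves can never be realized.
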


\begin{proof}
By Proposition $\ref{AYSwithC}$ and $(\ref{AYS})$, we have $\Delta f=n$.
By applying maximum principle, we get $f$ is constant. From this and $(\ref{f1})$, we have $R=0$ and $\rho=-1$.
\end{proof}

By applying Proposition $\ref{AYSwithC}$ to Yamabe solitons, we can get the following. 
\begin{cor}
Any Yamabe soliton with a concurrent vector field is a gradient expanding Yamabe soliton with zero scalar curvature and $\rho=-1$.
\end{cor}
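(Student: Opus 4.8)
The plan is to deduce this statement directly from Proposition \ref{AYSwithC}, exploiting the single extra piece of structure that distinguishes a Yamabe soliton from a general almost Yamabe soliton, namely that the soliton function $\rho$ is constant. Since a Yamabe soliton $(M,g,v,\rho)$ is in particular an almost Yamabe soliton with a concurrent vector field $v$, Proposition \ref{AYSwithC} applies verbatim and yields two things at once: first, that $v$ is a gradient field, $v=\nabla f$ with $f=\tfrac{1}{2}g(v,v)$, so that the soliton is automatically of gradient type; and second, that $R=\rho+1$ everywhere on $M$.

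The next step is to feed the constancy of $\rho$ into this relation. Because $\rho$ is constant for a Yamabe soliton, the identity $R=\rho+1$ forces the scalar curvature $R$ to be constant as well, whence $\nabla R=0$ and $R-\rho\equiv 1$, so that $\Delta(R-\rho)=0$. I would then substitute these into the fundamental identity $(\ref{f1})$, which reads
\begin{equation*}
(n-1)\Delta (R-\rho)+\tfrac{1}{2}\,g(\nabla R,\nabla f)+R(R-\rho)=0.
\end{equation*}
The first two terms vanish identically and the third collapses to $R\cdot 1$, leaving $R=0$. Combining this with $R=\rho+1$ gives $\rho=-1$; in particular $\rho<0$, so the soliton is expanding, and it is gradient by the first part. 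This establishes all of the claimed conclusions.

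The argument is therefore entirely routine once Proposition \ref{AYSwithC} and the lemma are in hand; the only point requiring care --- and the reason this corollary is not subsumed by the preceding compact one --- is that here no compactness is assumed. In the compact setting one eliminates the gradient term in $(\ref{f1})$ through a maximum-principle argument, but for a general (possibly complete noncompact) Yamabe soliton that tool is unavailable. What replaces it is precisely the hypothesis that $\rho$ is constant, which annihilates the $\Delta(R-\rho)$ and $g(\nabla R,\nabla f)$ terms pointwise rather than after integration, so that the conclusion $R=0$ holds globally without any curvature or volume growth assumptions.
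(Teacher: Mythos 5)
Your proof is correct and follows exactly the paper's own (very terse) argument: apply Proposition~\ref{AYSwithC} to obtain the gradient structure and $R=\rho+1$, then use the constancy of $\rho$ in the identity $(\ref{f1})$ to kill the $\Delta(R-\rho)$ and $g(\nabla R,\nabla f)$ terms, leaving $R(R-\rho)=R=0$ and hence $\rho=-1$. Your closing remark correctly identifies why constancy of $\rho$ substitutes for the maximum principle used in the compact case.
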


\begin{proof}
Since $\rho$ is constant, by Proposition $\ref{AYSwithC}$ and $(\ref{f1})$, we have $R=0$ and $\rho=-1$.
\end{proof}

\section{Almost Yamabe solitons on submanifolds}\label{SubAYS}

In this section, we assume that $(N,\tilde{g})$ is a Riemannian manifold endowed with a concurrent vector field $v$ and $(M,g)$ is a submanifold in $(N,\tilde{g})$.
We denote the tangential and the normal components of $v$ by $v^T$ and $v^{\perp}$, respectively.

To classify almost Yamabe solitons on a submanifold, we show the following Lemma which will be used in the proof of Proposition~$\ref{MAYS}$ and Theorem~$\ref{main.2}$. 

\begin{lem}\label{NSAYS}
Any almost Yamabe soliton $(M,g,v^T,\rho)$ on a submanifold $M$ in $N$ satisfies
\begin{equation}\label{ENSAYS}
(R-\rho-1)g(X,Y)=g(A_{v^{\perp}}(X),Y),
\end{equation}
for any vector fields $X, Y$ on $M$.
\end{lem}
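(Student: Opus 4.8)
The plan is to compute the deformation tensor $\frac{1}{2}\mathcal{L}_{v^T}g$ explicitly and match it against the defining equation $(R-\rho)g=\frac{1}{2}\mathcal{L}_{v^T}g$. Writing the Lie derivative through the Levi-Civita connection $\nabla$ of $M$, for any vector fields $X,Y$ tangent to $M$ we have
\begin{equation*}
\mathcal{L}_{v^T}g(X,Y)=g(\nabla_Xv^T,Y)+g(\nabla_Yv^T,X),
\end{equation*}
so the whole computation reduces to identifying the tangential covariant derivative $\nabla_Xv^T$.

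The key step is to exploit that $v$ is concurrent on the ambient manifold $N$. Since $X$ is tangent to $M$, and hence to $N$, the concurrency relation gives $\tilde{\nabla}_Xv=X$. Decomposing $v=v^T+v^\perp$ and applying the formulas of Gauss and Weingarten recalled in Section~$\ref{Pre}$,
\begin{equation*}
X=\tilde{\nabla}_Xv=\bigl(\nabla_Xv^T-A_{v^\perp}(X)\bigr)+\bigl(h(X,v^T)+D_Xv^\perp\bigr),
\end{equation*}
where the first bracket is tangential and the second is normal. First I would equate tangential components on both sides; since $X$ is purely tangential this yields
\begin{equation*}
\nabla_Xv^T=X+A_{v^\perp}(X).
\end{equation*}

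Substituting this back into the Lie derivative and invoking that the shape operator $A_{v^\perp}$ is $g$-self-adjoint, so that $g(A_{v^\perp}(Y),X)=g(A_{v^\perp}(X),Y)$, the two cross terms merge and I expect
\begin{equation*}
\frac{1}{2}\mathcal{L}_{v^T}g(X,Y)=g(X,Y)+g(A_{v^\perp}(X),Y).
\end{equation*}
Comparing with $(R-\rho)g(X,Y)$ and transferring the term $g(X,Y)$ to the left-hand side gives precisely $(\ref{ENSAYS})$.

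I do not anticipate a serious obstacle: the argument is ultimately careful bookkeeping of tangential versus normal parts. The only points requiring attention are applying the concurrency condition with the ambient connection $\tilde{\nabla}$ before projecting onto $M$, and using the symmetry of $A_{v^\perp}$ to collapse the two cross terms into a single $g(A_{v^\perp}(X),Y)$.
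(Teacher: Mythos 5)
Your argument is correct and coincides with the paper's own proof: both decompose $\tilde{\nabla}_Xv=X$ via the Gauss and Weingarten formulas to extract $\nabla_Xv^T=X+A_{v^{\perp}}(X)$, feed this into $(\mathcal{L}_{v^T}g)(X,Y)=g(\nabla_Xv^T,Y)+g(\nabla_Yv^T,X)$ (using the $g$-symmetry of $A_{v^{\perp}}$), and compare with the soliton equation $(\ref{AYS})$. Your explicit remark on the self-adjointness of the shape operator is a detail the paper leaves implicit, but the route is the same.
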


\begin{proof}
Since $v$ is a concurrent vector field and by using formulas of Gauss and Weingarten, we have
\begin{equation}\label{s.1}
\begin{tabular}{ll}
\hspace{1.5pc}$X$ & $={\tilde{\nabla}}_Xv={\tilde{\nabla}}_X(v^T+v^{\perp})$ \vspace{0.5pc}\\
~ & $=({\nabla}_Xv^T+h(X,v^T))+(-A_{v^{\perp}}(X)+D_Xv^{\perp})$,
\end{tabular}
\end{equation}
for any vector field $X$ on $M$.
By comparing the tangential and the normal components of $(\ref{s.1})$, we obtain
\begin{equation}\label{s.2}
{\nabla}_Xv^T=X+A_{v^{\perp}}(X),~h(X,v^T)=-D_Xv^{\perp}.
\end{equation}
From the definition of Lie-derivative and $(\ref{s.2})$, we have
\begin{equation}\label{s.3}
(\mathcal{L}_{v^T}g)(X,Y)=2g(X,Y)+2g(A_{v^{\perp}}(X),Y),
\end{equation}
for any vector fields $X, Y$ on $M$.
Combining $(\ref{s.3})$ with $(\ref{AYS})$, we obtain $(\ref{ENSAYS})$.
\end{proof}

\begin{prop}\label{AYS is AGYS}
Any almost Yamabe soliton $(M,g,v^T,\rho)$ on a submanifold $M$ in $N$  is a gradient almost Yamabe soliton. 
\end{prop}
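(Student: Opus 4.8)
The plan is to produce an explicit potential function, adapting the construction used in the proof of Proposition~\ref{AYSwithC} to the present submanifold setting. I would set
\begin{equation*}
f=\frac{1}{2}~\tilde{g}(v,v)\big|_M,
\end{equation*}
the restriction to $M$ of half the squared norm of the ambient concurrent vector field, and claim that $\nabla f=v^T$. Since the soliton equation $(\ref{AYS})$ is already written with respect to the tangential field $v^T$, establishing this identity immediately shows that $(M,g,f,\rho)$ is a gradient almost Yamabe soliton.

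The key computation is to differentiate $f$ along $M$. For any vector field $X$ tangent to $M$, compatibility of $\tilde{\nabla}$ with $\tilde{g}$ together with the concurrence relation $\tilde{\nabla}_Xv=X$ gives
\begin{equation*}
X(f)=\tilde{g}(v,\tilde{\nabla}_Xv)=\tilde{g}(v,X).
\end{equation*}
Decomposing $v=v^T+v^{\perp}$ and using that $v^{\perp}$ is normal to $M$, so that $\tilde{g}(v^{\perp},X)=0$, this reduces to
\begin{equation*}
X(f)=\tilde{g}(v^T,X)=g(v^T,X).
\end{equation*}
As this holds for every tangent $X$, I would conclude $\nabla f=v^T$ on $M$, and hence the almost Yamabe soliton is gradient.

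As for difficulty, this statement is essentially a direct verification and I do not expect a serious analytic obstacle; it is a submanifold analogue of the gradient claim in Proposition~\ref{AYSwithC}. The only point deserving care is the orthogonal splitting: one must check that differentiating the \emph{restricted} function $f$ along $M$ recovers precisely the tangential part $v^T$ and discards $v^{\perp}$. This is exactly what the identity $\tilde{g}(v^{\perp},X)=0$ for $X$ tangent to $M$ guarantees, and it is the same mechanism already exploited in the tangential/normal decomposition $(\ref{s.2})$ of Lemma~\ref{NSAYS}.
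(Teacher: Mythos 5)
Your proposal is correct and coincides with the paper's own proof: both take $f=\frac{1}{2}\,\tilde{g}(v,v)$ restricted to $M$ and verify $g(\nabla f,X)=X(f)=\tilde{g}(v,\tilde{\nabla}_Xv)=\tilde{g}(v,X)=g(v^T,X)$ for all tangent $X$. Your added remark about discarding $v^{\perp}$ via $\tilde{g}(v^{\perp},X)=0$ is exactly the implicit step in the paper's final equality, so there is nothing to fix.
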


\begin{proof}
Set
\begin{equation*}
\displaystyle f=\frac{1}{2}~\tilde{g}(v,v).
\end{equation*}
Then we have
\begin{equation*}
g(\nabla f,X)=X(f)=\tilde{g}(v,{\tilde{\nabla}}_Xv)=\tilde{g}(v,X)=g(v^T,X),
\end{equation*}
for any vector field $X$ on $M$.
\end{proof}

\begin{prop}\label{MAYS}
If an almost Yamabe soliton $(M,g,v^T,\rho)$ on a submanifold $M$ in $N$ is minimal, then $R=\rho+1$.
\end{prop}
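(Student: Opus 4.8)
The plan is to obtain the scalar relation $R=\rho+1$ by contracting the tensorial identity $(\ref{ENSAYS})$ supplied by Lemma~$\ref{NSAYS}$. Since $(\ref{ENSAYS})$ holds for all tangent $X,Y$, I would fix a local $g$-orthonormal frame $\{e_1,\dots,e_n\}$ on $M$ and take the trace of both sides. The left-hand side contracts to $n(R-\rho-1)$, so everything reduces to identifying the trace of the Weingarten endomorphism $A_{v^{\perp}}$.

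The key step is to relate $\sum_i g(A_{v^{\perp}}(e_i),e_i)$ to the mean curvature vector. Using the relation $g(A_{\eta}(X),Y)=\tilde g(h(X,Y),\eta)$ recorded in Section~$\ref{Pre}$ with $\eta=v^{\perp}$, I would compute
\begin{equation*}
\mathrm{trace}\,A_{v^{\perp}}=\sum_{i=1}^n g(A_{v^{\perp}}(e_i),e_i)=\sum_{i=1}^n \tilde g(h(e_i,e_i),v^{\perp})=\tilde g\Big(\sum_{i=1}^n h(e_i,e_i),\,v^{\perp}\Big)=n\,\tilde g(H,v^{\perp}),
\end{equation*}
where the last equality uses $H=\frac{1}{n}\,\mathrm{trace}\,h$. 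This is the only computation with any content, and it is a direct bookkeeping exercise once the definition of $H$ and the shape-operator relation are in hand; I do not anticipate a genuine obstacle here.

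Finally I would invoke the minimality hypothesis $H=0$, which forces $\mathrm{trace}\,A_{v^{\perp}}=0$. The traced form of $(\ref{ENSAYS})$ then reads $n(R-\rho-1)=0$, and dividing by $n$ yields $R=\rho+1$, as claimed. The proof is thus essentially immediate from Lemma~$\ref{NSAYS}$; the whole difficulty, such as it is, lies in correctly passing from the normal-vector pairing $\tilde g(h(e_i,e_i),v^{\perp})$ to the mean curvature, after which minimality closes the argument with no further estimates needed.
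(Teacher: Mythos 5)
Your proposal is correct and follows essentially the same route as the paper: trace the identity of Lemma~\ref{NSAYS} over an orthonormal frame, identify $\mathrm{trace}\,A_{v^{\perp}}=n\,\tilde g(H,v^{\perp})$ via the shape-operator relation, and conclude from $H=0$ that $R=\rho+1$. The only cosmetic difference is that the paper first invokes Proposition~\ref{AYS is AGYS} (the gradient structure), which is not actually needed for this trace argument, so your streamlined version is fine.
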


\begin{proof}
From Proposition~$\ref{AYS is AGYS}$, we know that any almost Yamabe soliton on a submanifold is a gradient almost Yamabe soliton.
Let $\{e_1,  \cdots , e_n\}$ be an orthonormal frame on $M$. From Lemma~$\ref{NSAYS}$, we have
\begin{equation*}
(R-\rho-1)g_{ij}=g(A_{v^{\perp}}(e_i),e_j).
\end{equation*}
Since $M$ is minimal and taking the trace, we obtain
\begin{equation*}
n(R-\rho-1)=n\tilde g(H,v^{\perp})=0. 
\end{equation*}
Therefore we conclude that
\begin{equation*}
R=\rho+1.
\end{equation*}
\end{proof}

\begin{cor}
Any compact almost Yamabe soliton on a minimal submanifold in $N$ is a gradient expanding Yamabe soliton with zero scalar curvature and $\rho=-1$.
\end{cor}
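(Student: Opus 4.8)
The plan is to follow the template of the preceding corollary for concurrent vector fields, replacing the appeals to Proposition~\ref{AYSwithC} by the minimal-submanifold results established in this section. First I would record that the soliton is gradient and pin down the trace of its defining equation. By Proposition~\ref{AYS is AGYS}, with $f=\frac12\tilde g(v,v)$ one has $\nabla f=v^T$, so the defining equation $(\ref{AYS})$ becomes $\nabla^2 f=(R-\rho)g$, where $\nabla^2 f$ denotes the Hessian. Minimality now enters through Proposition~\ref{MAYS}, which gives $R=\rho+1$, that is $R-\rho\equiv 1$. Substituting, the Hessian equation reads $\nabla^2 f=g$, and taking the trace yields
\begin{equation*}
\Delta f=n.
\end{equation*}

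The core of the argument is the compactness hypothesis. Since $M$ is closed, the smooth function $f$ attains its extrema, and applying the maximum principle to the equation $\Delta f=n$ forces $f$ to be constant; in particular $\nabla f=0$. Because $R-\rho\equiv 1$ we have $\Delta(R-\rho)=0$, so the key identity $(\ref{f1})$ collapses to $R(R-\rho)=0$. As $R-\rho=1\neq 0$, this gives $R=0$, and then $\rho=R-1=-1$. Since $\rho$ is now the constant $-1<0$, the almost Yamabe soliton is genuinely an \emph{expanding Yamabe soliton}, its scalar curvature vanishes, and by Proposition~\ref{AYS is AGYS} it is gradient --- precisely the three assertions of the corollary.

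The step I expect to be the main obstacle is the passage from $\Delta f=n$ to the constancy of $f$: the orientation of the Laplacian and the sign of $n$ must be tracked with care, and it is exactly here that closedness of $M$ (no boundary, so $f$ must realize an interior maximum) is indispensable --- on a noncompact minimal submanifold the function $f=\frac12\tilde g(v,v)$ need not be constant and the conclusions $R=0$, $\rho=-1$ can fail. Once $f$ is constant, the remaining deductions are purely algebraic manipulations of $(\ref{f1})$ and $(\ref{AYS})$, requiring no geometric input beyond the minimality already exploited through Proposition~\ref{MAYS}.
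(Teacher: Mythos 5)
Your proposal is correct and takes essentially the same approach as the paper: Propositions \ref{AYS is AGYS} and \ref{MAYS} together with $(\ref{AYS})$ give $\Delta f=n$, the maximum principle on the compact manifold forces $f$ to be constant, and then $(\ref{f1})$ with $\nabla f=0$ and $R-\rho\equiv 1$ collapses to $R(R-\rho)=0$, yielding $R=0$ and $\rho=-1$. You merely spell out the intermediate computations that the paper's three-line proof leaves implicit.
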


\begin{proof}
By Proposition $\ref{MAYS}$ and $(\ref{AYS})$, we have $\Delta f=n$.
By applying maximum principle, we get $f$ is constant. From this and $(\ref{f1})$, we have $R=0$ and $\rho=-1$.
\end{proof}

\begin{cor}
Any  Yamabe soliton on a minimal submanifold in $N$ is a gradient expanding Yamabe soliton with zero scalar curvature and $\rho=-1$.
\end{cor}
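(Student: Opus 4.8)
The plan is to follow the template set by the two corollaries of Proposition~$\ref{AYSwithC}$ in Section~$\ref{concurrent}$, now transported to the submanifold setting of this section. Recalling that a Yamabe soliton is exactly an almost Yamabe soliton whose function $\rho$ is constant, I would first invoke Proposition~$\ref{AYS is AGYS}$ to note that such a soliton is automatically a \emph{gradient} Yamabe soliton, with $v^T=\nabla f$ for $f=\frac{1}{2}\tilde{g}(v,v)$. This makes the scalar identity~$(\ref{f1})$ applicable.

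The key steps are then as follows. Since $M$ is minimal, Proposition~$\ref{MAYS}$ gives $R=\rho+1$, that is, $R-\rho\equiv 1$ on $M$. Because $\rho$ is constant for a genuine Yamabe soliton, $R=\rho+1$ is constant as well, so $\nabla R=0$ and $\Delta(R-\rho)=\Delta(1)=0$. Substituting these together with $R-\rho=1$ into
\begin{equation*}
(n-1)\Delta(R-\rho)+\frac{1}{2}\,g(\nabla R,\nabla f)+R(R-\rho)=0,
\end{equation*}
kills every term except the last and leaves $R=0$. Hence $\rho=R-1=-1<0$, so the soliton is expanding, and $(\ref{AYS})$ reads $\frac{1}{2}\mathcal{L}_{v^T}g=(R-\rho)g=g$.

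I do not anticipate a real obstacle here. The one point worth noting is a contrast with the compact corollary just above: there one must run the maximum principle on $\Delta f=n$ to force $f$ to be constant before $(\ref{f1})$ can be exploited, whereas the constant-$\rho$ hypothesis of a Yamabe soliton makes $R-\rho\equiv 1$ constant outright, so $\nabla R$ and $\Delta(R-\rho)$ vanish with no analytic input and no compactness needed. The only care required is to observe that being \emph{constant} (not merely bounded) is what makes those two terms drop.
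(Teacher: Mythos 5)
Your proposal is correct and follows essentially the same route as the paper, whose entire proof reads: since $\rho$ is constant, Proposition~\ref{MAYS} and~$(\ref{f1})$ give $R=0$ and $\rho=-1$. You have merely made explicit the details the paper leaves implicit --- invoking Proposition~\ref{AYS is AGYS} so that~$(\ref{f1})$ applies, and noting that constancy of $R-\rho\equiv 1$ annihilates the first two terms of~$(\ref{f1})$ --- which is exactly the intended argument.
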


\begin{proof}
Since $\rho$ is constant, by Proposition $\ref{MAYS}$ and $(\ref{f1})$, we have $R=0$ and $\rho=-1$.
\end{proof}

\section{Classification of almost Yamabe solitons in Euclidean spaces}\label{Classification}

In this section, we give the proof of Theorem~$\ref{main}$, namely, we completely classify almost Yamabe solitons on hypersurfaces in Euclidean spaces arisen from the position vector field.
Let $v$ be the position vector field on Euclidean spaces.

\begin{thm}\label{main.2}
Any almost Yamabe soliton $(M,g,v^T,\rho)$ on a hypersurface in a Euclidean space $\mathbb{E}^{n+1}$ is contained in either a hyperplane or a sphere.
\end{thm}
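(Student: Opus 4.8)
The plan is to exploit the structure equation from Lemma~\ref{NSAYS}, which for a hypersurface reads $(R-\rho-1)g(X,Y)=g(A_{v^{\perp}}(X),Y)$. Since $M$ is a hypersurface, the normal bundle is one-dimensional; writing $v^{\perp}=\lambda\,\xi$ for a unit normal $\xi$ and the scalar shape operator $A=A_{\xi}$, this equation becomes $A_{v^{\perp}}=\lambda A=(R-\rho-1)\,\mathrm{Id}$ as an endomorphism of $TM$. Thus \emph{wherever $\lambda\neq 0$}, the shape operator is a scalar multiple of the identity, i.e. $M$ is totally umbilical at such points with principal curvature $\mu=(R-\rho-1)/\lambda$. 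The key structural fact I would extract first is therefore: the almost Yamabe soliton condition forces $M$ to be totally umbilical on the open set where $v^{\perp}\neq 0$.

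The second step is to handle the two regimes. On the open set where $\lambda=0$, every point is totally geodesic by the same equation (since then $R-\rho-1=0$ as well, forcing $A_{v^\perp}=0$, but more usefully $v$ is tangent so the position vector lies in the tangent space), and a standard argument shows $M$ lies in a hyperplane through the origin. On the set where $\lambda\neq 0$, umbilicity gives that $M$ is (an open piece of) a hyperplane or a round sphere, by the classical classification of totally umbilical hypersurfaces in $\mathbb{E}^{n+1}$. To promote these local statements to the global conclusion, I would show the umbilic principal curvature $\mu$ is actually constant: differentiating $A=\mu\,\mathrm{Id}$ and feeding it into the Codazzi equation (which here reads $(\bar\nabla_X h)(Y,Z)=(\bar\nabla_Y h)(X,Z)$ since $\mathbb{E}^{n+1}$ is flat) yields, for $n\geq 2$, that $\mu$ is locally constant. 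If $\mu\equiv 0$ the component is planar; if $\mu\neq 0$ it is spherical.

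The final step is a connectedness/continuity argument to rule out a hybrid in which different pieces are planar and spherical, and to pin down the \emph{ambient} position of these pieces using the concurrent (position) vector field. The point is that $v$ being the position vector field rigidifies the situation: for the umbilic sphere piece one computes, using $\nabla_X v^T = X + A_{v^\perp}(X)$ and $h(X,v^T)=-D_X v^\perp$ from (\ref{s.2}), that the center and radius are determined and constant, so the piece lies in one fixed sphere; for the planar piece one shows the hyperplane is fixed. Since $M$ is connected and the two alternatives (totally geodesic versus umbilic with $\mu\neq0$) are governed by whether $R-\rho-1$ vanishes, one argues the alternative cannot switch, so all of $M$ sits in a single hyperplane or a single sphere.

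I expect the main obstacle to be the promotion from \emph{local} to \emph{global} rigidity, precisely the matching of pieces across the locus $\{\lambda=0\}=\{R-\rho-1=0\}$. The clean umbilic conclusion holds only on the open set $v^{\perp}\neq 0$, and one must argue carefully that the boundary where $v^{\perp}$ vanishes does not allow a planar region and a spherical region to coexist in a single connected hypersurface; controlling the soliton function $\rho$ (which, unlike the Yamabe-soliton case, is only smooth, not constant) across this transition is the delicate part, and is where the concurrent-field identities must be used to force $\mu$, the center, and the radius to be globally constant rather than merely locally constant.
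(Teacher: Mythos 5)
Your overall route is the paper's: Lemma~\ref{NSAYS} plus the one-dimensionality of the normal bundle gives $\lambda A=(R-\rho-1)\,\mathrm{Id}$ with $\lambda=\tilde g(v,N)$, hence umbilicity; Codazzi in the flat ambient space makes the umbilic factor $\alpha$ constant (for $n\geq 2$, via $X(\alpha)Y=Y(\alpha)X$); and the dichotomy $\alpha=0$ versus $\alpha\neq 0$ is resolved exactly as you say, by showing that $N$, respectively $v+\alpha^{-1}N$, is a constant vector of $\mathbb{E}^{n+1}$. In fact the paper passes from $R-\rho-1=\lambda\kappa_i$ and $R-\rho-1=\lambda\alpha$ to $\kappa_i=\alpha$ by cancelling $\lambda$ without comment, so the subtlety you isolate at $\{\lambda=0\}$ is real and is not addressed there either; you deserve credit for spotting it.

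The genuine gap is in your handling of the open set where $v^{\perp}=0$. There the identity $(R-\rho-1)g(X,Y)=g(A_{v^{\perp}}(X),Y)$ degenerates to $0=0$ and constrains the shape operator not at all, so your assertion that such points are totally geodesic, and that ``a standard argument shows $M$ lies in a hyperplane through the origin,'' is false. What $v^{\perp}\equiv 0$ on an open set actually says is that the position vector is tangent there, i.e.\ that piece of $M$ is a cone over some hypersurface of the unit sphere; and since $\rho$ is merely a smooth function, equation~(\ref{s.3}) gives $\mathcal{L}_{v^T}g=2g$ on that piece, so the almost Yamabe soliton equation holds there with the choice $\rho=R-1$ --- for an arbitrary cone, round or not. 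Consequently your third step cannot succeed as described: on $\{\lambda=0\}$ there is no umbilicity to propagate across the transition locus, no connectedness argument will show ``the alternative cannot switch,'' and that step is in any case only announced, not proved. To close the argument you must either add a hypothesis guaranteeing that $\{\lambda\neq 0\}$ is dense (then umbilicity extends to all of $M$ by continuity of $A$, and your Codazzi step finishes), or invoke completeness or compactness of $M$ to exclude nontrivial cone pieces (a genuine cone cannot be completed smoothly through its vertex). As written, your proposal --- and, implicitly, the paper's own proof --- establishes the theorem only for hypersurfaces on which the support function $\lambda$ does not vanish on an open set.
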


\begin{proof}
Let $\alpha$ be a mean curvature and $\lambda$ be a support function of $M$,  i.e. $H=\alpha N$ and $\lambda =\tilde{g}(N,v)$ with a unit normal vector field $N$.
From Lemma~$\ref{NSAYS}$, 
\begin{equation*}
(R-\rho-1)g_{ij}=\tilde{g}(h(e_i,e_j),v^{\perp})=\tilde{g}({\kappa}_i g_{ij} N,v)={\kappa}_i g_{ij} \lambda ,
\end{equation*}
where $A_N(e_i)={\kappa}_ie_i, ~ i=1,\cdots ,n$. 
So we have
\begin{equation}\label{e.1}
R-\rho-1=\lambda {\kappa}_i.
\end{equation}
Taking the summation, we obtain
\begin{equation}\label{e.2}
R-\rho-1=\lambda \alpha.
\end{equation} 
Comparing $(\ref{e.1})$ and $(\ref{e.2})$, we have
\begin{equation*}
{\kappa}_i = \alpha .
\end{equation*}
Therefore $M$ is a totally umbilical submanifold with $A_N(e_i)=\alpha e_i$ and $h$ satisfies $h(X,Y)=\alpha g(X,Y) N$.
Now we have
\begin{equation*}
0={\tilde{\nabla}}_X(\tilde{g}(N,N))=2\tilde{g}({\tilde{\nabla}}_XN,N)=2\tilde{g}(D_XN,N).
\end{equation*}
Therefore $D_XN=0$.
So we obtain
\begin{equation*}
\begin{tabular}{rl}
$({\bar{\nabla}}_Xh)(Y,Z)=$ & $D_Xh(Y,Z)-h({\nabla}_XY,Z)-h(Y,{\nabla}_XZ)$ \vspace{0.5pc} \\
~$=$ & $X(\alpha ) g(Y,Z) N,$
\end{tabular}
\end{equation*}
for any vector fields $X, Y, Z$ on $M$.
From the equation of Codazzi, we have
\begin{equation*}
X(\alpha)Y=Y(\alpha)X.
\end{equation*}
Taking $X$ and $Y$ linearly independent, we conclude that $\alpha$ is a constant.\\
\underline{Case 1: $\alpha =0.$} 
From ${\tilde{\nabla}}_XN=0,$ N, restricted to $M$, is a constant in $\mathbb{E}^{n+1}$
and we have
\begin{equation*}
{\tilde{\nabla}}_X(\tilde{g}(v,N))=\tilde{g}({\tilde{\nabla}}_Xv,N)+\tilde{g}(v,{\tilde{\nabla}}_XN)=\tilde{g}(X,N)=0.
\end{equation*}
This shows that $\tilde{g}(v,N)$ is constant when $v$ and $N$ is restricted to $M$. Therefore $M$ is contained in the hyperplane normal to $N.$\\
\underline{Case 2: $\alpha \neq 0. $}
 We have
\begin{equation*}
{\tilde{\nabla}}_X(v+{\alpha}^{-1}N)=X+{\alpha}^{-1} {\tilde{\nabla}}_XN=X+{\alpha}^{-1} (-A_N(X))=0.
\end{equation*}
This shows that the vector field $v+{\alpha}^{-1}N$, restricted to $M$, is a constant in $\mathbb{E}^{n+1}.$ Therefore $M$ is contained in the sphere.
\end{proof}


\section{Appendix}
In this appendix, we completely classify complete gradient Ricci solitons on minimal submanifolds in Euclidean spaces or hyperbolic spaces.
$(M,g,f,\rho)$ is called a gradient Ricci soliton if it satisfies (see for example \cite{CLN06}),
\begin{equation}\label{RS}
\Ric+\nabla\nabla f +\frac{\rho}{2}g=0.
\end{equation}

Some recent progress on the subject can be found in \cite{Cao10}.

\begin{thm}
(i) Any complete gradient Ricci soliton on a minimal submanifold $M$ in a Euclidean space is an affine subspace.\\
(ii) There is no complete gradient Ricci soliton on a minimal submanifold $M$ in a hyperbolic space.
 \end{thm}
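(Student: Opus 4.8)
The plan is to read off the intrinsic curvature of $M$ from the Gauss equation, substitute it into the soliton equation $(\ref{RS})$, and then use completeness to force the second fundamental form to vanish. Writing $c$ for the constant sectional curvature of the ambient space form ($c=0$ in the Euclidean and $c=-1$ in the hyperbolic case) and $h$ for the second fundamental form, the Gauss equation together with minimality ($\mathrm{trace}\,h=0$) gives, for an orthonormal frame $\{e_i\}$,
\begin{equation*}
\Ric(X,X)=(n-1)c\,|X|^2-\sum_k |h(X,e_k)|^2,\qquad R=n(n-1)c-|h|^2 .
\end{equation*}
Hence $\Ric\le (n-1)c\,g$ and $R\le n(n-1)c$, with equality in the scalar inequality precisely when $h\equiv 0$, i.e. $M$ is totally geodesic. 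In the Euclidean case this reads $R\le 0$ and in the hyperbolic case $R\le -n(n-1)<0$. Thus the whole theorem reduces to showing that the soliton must be totally geodesic: a complete totally geodesic submanifold of $\mathbb{E}^{m}$ is an affine subspace, which is (i), while (ii) becomes the assertion that the remaining totally geodesic possibility in $\mathbb{H}^{m}$ cannot carry a (nontrivial) complete Ricci soliton.

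First I would dispose of the shrinking and steady cases $\rho\le 0$ (equivalently $\lambda:=-\rho/2\ge 0$). Rewriting $(\ref{RS})$ as $\Ric+\nabla\nabla f=\lambda g$ and invoking the known scalar-curvature estimate for complete gradient Ricci solitons, namely $R\ge 0$ for shrinkers and steady solitons, the bound $R\le n(n-1)c$ forces $R=0$ when $c=0$ (so $|h|^2=0$ and $M$ is totally geodesic) and is outright contradictory when $c=-1$. This settles both parts whenever $\rho\le 0$.

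The hard part will be the expanding case $\rho>0$ ($\lambda<0$), where the naive bound $R\ge n\lambda$ is too weak to pin down $R$. Here I would pass to the drift Laplacian $\Delta_f:=\Delta-\la\nabla f,\nabla\cdot\ra$ and use the soliton identity $\Delta_f R=2\lambda R-2|\Ric|^2$ together with $|\Ric|^2\ge R^2/n$, giving $\Delta_f R\le -\tfrac{2}{n} R\,(R-n\lambda)$. Since the potential $f$ of a complete soliton is proper with controlled (at most quadratic) growth, the weighted manifold carries a generalized Omori--Yau maximum principle; applied at the infimum of $R$ it pushes $R$ toward $n\lambda$ and, via the strong maximum principle, forces equality in Cauchy--Schwarz, hence $\Ric=\tfrac{R}{n}g$ (Einstein) with $R$ constant. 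Feeding $\Ric=\tfrac{R}{n}g$ back into the Gauss formula above shows that $S_{ij}:=\sum_k\la h(e_i,e_k),h(e_j,e_k)\ra$ is a constant multiple of $g$; combining this with the Codazzi equation (which, the ambient space being a space form, reduces to the symmetry of $\nabla h$ established in the Preliminaries) and completeness, I would then argue $h\equiv 0$.

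Finally I would read off the conclusions from $h\equiv 0$: in $\mathbb{E}^{m}$ a complete totally geodesic $M$ is an affine subspace, proving (i); in $\mathbb{H}^{m}$ the only totally geodesic possibility is a totally geodesic copy of $\mathbb{H}^{n}$, which is Einstein with $\Ric=-(n-1)g$, and this last (expanding Einstein) case is the one that must be ruled out to obtain (ii). I expect the two genuine obstacles to be exactly these: first, justifying the weighted maximum-principle step in the non-compact expanding setting, where one must control the geometry of $M$ through the growth of $f$ to legitimately evaluate $\Delta_f R$ at the infimum of $R$; and second, the elimination of the Einstein possibility in the hyperbolic case, which is the most delicate point of statement (ii) and the place where the precise (nontriviality) hypothesis on the soliton has to be used.
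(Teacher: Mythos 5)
Your first half coincides with the paper's proof: the Gauss equation plus minimality gives $R=cn(n-1)-|h|^2$, and once one knows $R\ge 0$ the Euclidean case forces $0\le R=-|h|^2$, hence $h\equiv 0$ and an affine subspace, while the hyperbolic case gives the absurdity $|h|^2\le -n(n-1)$. The difference is that the paper does not split by soliton type at all: it invokes B.-L. Chen's theorem in the blanket form ``any complete gradient Ricci soliton has nonnegative scalar curvature'' and is done in three lines. Everything you place in your ``hard'' expanding case is, in the paper, absorbed by that single citation.

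The genuine gap is that your expanding-case argument does not close, and the step that fails is precisely the maximum-principle step. From $\Delta_f R\le -\tfrac{2}{n}R(R-n\lambda)$, the weighted Omori--Yau principle applied at $\inf R$ yields only $R_{\inf}(R_{\inf}-n\lambda)\le 0$, i.e.\ the standard lower bound $n\lambda\le R$ (with rigidity only if the value $n\lambda$ is actually attained); it does not ``push $R$ toward $n\lambda$''. The Gaussian expander, with $R\equiv 0>n\lambda$, shows the inequality can be strict everywhere, so your conclusion that the soliton is Einstein is unjustified, and with it the subsequent, in any case only sketched, deduction $h\equiv 0$: in the Euclidean case $n\lambda\le R=-|h|^2\le 0$ gives merely $|h|^2\le n|\lambda|$, a bound rather than vanishing. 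And, as you concede yourself, the totally geodesic $\mathbb{H}^n\subset\mathbb{H}^m$ --- an Einstein, hence trivial expanding, gradient soliton with $R=-n(n-1)$ --- is never eliminated, so (ii) remains unproved in your scheme. I should add that your underlying suspicion is mathematically well founded: Chen's theorem concerns ancient solutions, so it covers shrinkers and steadies, while hyperbolic space itself shows $R\ge 0$ fails for expanders; the paper's one-line proof is therefore only airtight under a reading of ``complete gradient Ricci soliton'' for which the quoted form of Chen's result holds. But judged as a proof of the theorem as stated, your proposal is incomplete: the expanding case, which the paper disposes of by citation, is left open.
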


\begin{proof}
Let $c$ be a sectional curvature of a Euclidean space and a hyperbolic spaces, namely, $c=0$ or $-1$. From the equation of Gauss, 
$$\Ric(X,Y)=c(n-1) g(X,Y)-\tilde g (h(X,e_i),h(e_i,Y))+\tilde g (h(X,Y),nH),$$
for any vector fields $X, Y$ on $M$. Since $M$ is a minimal, we have
$$\Ric(X,Y)=c(n-1) g(X,Y)-\tilde g (h(X,e_i),h(e_i,Y)).$$
 Therefore,
 \begin{equation}\label{ap.1}
 R=cn(n-1)-|h|^2.
 \end{equation}
In \cite{BLCHEN09}, B. L. Chen showed that any complete gradient Ricci soliton has non-negative scalar curvature $R\geq0$.

\underline{Case 1: $c=0$.} By $(\ref{ap.1})$, $0\leq R=-|h|^2$. Therefore, $M$ is a totally geodesic submanifold and it is an affine subspace in $\mathbb{E}^{n+1}$.

\underline{Case 2: $c=-1$.} By the same argument, $|h|^2\leq -n(n-1)$, which can not happen.
 \end{proof}

{\bf Acknowledgments.}~
The work was done while the second author was visiting the Department of Mathematics of Texas A $\&$ M University-Commerce as a Visiting Scholar and he is grateful to the department and the university for the hospitality he had received during the visit. 
\bibliographystyle{amsbook}

\begin{thebibliography}{99}  

\bibitem{BB13}
Barbosa, E and Ribeiro, E, {\em On conformal solutions of the Yamabe flow}, Arch. Math . 101 (2013), 79-89.

\bibitem{Cao10}
Cao, H.-D.,
{\em Recent progress on Ricci solitons},
 Recent advances in geometric analysis, 1--38,
Adv. Lect. Math. (ALM), 11 Int. Press, Somerville, MA, 2010.

\bibitem{BLCHEN09}
Chen, B.-L.,
{\em Strong uniqueness of the Ricci flow}, J. Differential Geom. 82 (2009), 363--382.
 
\bibitem{1}
Chen, B. Y., Deshmukh, S.:
\textit{Ricci solitons and concurrent vector fields,}
Balkan J. Geom. Appl. 20 (2015), no. 1, 14-25.

\bibitem{CLN06}
Chow, B., Lu P., and  Ni L.,
{\em Hamilton's Ricci Flow},
Graduate Studies in Mathematics {\bf 77},  Amer. Math. Soc., (2006). 

\bibitem{2}
Daskalopoulos, P., Sesum, N.:
\textit{The classification of locally conformally flat Yamabe solitons,}
Adv. Math. 240 (2013) 346--369.

\bibitem{3}
Hsu, S. Y.:
\textit{A note on compact gradient Yamabe solitons,}
J. Math. Anal. Appl. 388 (2) (2012) 725--726.

\bibitem{4}
Ma, L., Miquel, V.:
\textit{Remarks on scalar curvature of Yamabe solitons,}
Ann Glob Anal Geom (2012) 42:195-205 DOI 10.1007/s10455-011-9308-7.

\bibitem{5}
Yano, K.:
\textit{Sur le parall$\acute{e}$lisme et la concourance dans l'espace de Riemann,}
Proc. Imp. Acad. Tokyo, 19 (1943), 189-197.

\bibitem{6}
Yano, K., Chen, B. Y.:
\textit{On the concurrent vector fields of immersed manifolds,}
Kodai Math. Sem. Rep. 23 (1971), 343-350.

\end{thebibliography}

\end{document}